\newcommand{\e}{\varepsilon}
\newcommand{\w}{\omega}
\newcommand{\defeq}{\overset{\mbox{\tiny\sf def}}=}
\newcommand{\F}{\mathcal F}
\newcommand{\IN}{\mathbb N}
\title{On symmetrizability and perfectness of second-countable spaces}
\author{Iryna Banakh, Taras Banakh and Lidiya Bazylevych}
\address{Ya. Pidstryhach Institute for Applied Problems of Mechanics and Mathematics, Naukova 3b, Lviv, Ukraine}
\email{ibanakh@yahoo.com}
\address{Faculty of Mechanics and Mathematics, Ivan Franko National University of Lviv, Ukraine}
\email{t.o.banakh@gmail.com}
\email{izar@litech.lviv.ua}
\subjclass{54A35, 54E35, 54H05}
\keywords{symmetrizable space, submetrizable space, $Q$-space, cardinal characteristc of the continuum}
\newtheorem{theorem}{Theorem}
\newtheorem{corollary}[theorem]{Corollary}
\newtheorem{proposition}[theorem]{Proposition}
\newtheorem{problem}[theorem]{Problem}
\newtheorem{question}[theorem]{Question}
\theoremstyle{definition}
\newtheorem{example}[theorem]{Example}
\begin{document}
\begin{abstract}
A symmetrizability criterion of Arhangelskii implies that a second-countable Hausdorff space is symmetrizable if and only if it is perfect. We present an example of a non-symmetrizable second-countable submetrizable space of cardinality $\mathfrak q_0$ and study the smallest possible cardinality $\mathfrak q_i$ of a non-symmetrizable second-countable $T_i$-space for $i\in\{1,2\}$. 
\end{abstract}
\maketitle

Let us recall that a function $d:X\times X\to[0,\infty)$ on a set $X$ is a {\em metric} if for every points $x,y,z\in X$ the following conditions are satisfied:
\begin{enumerate}
\item $d(x,y)=0$ if and only if $x=y$;
\item $d(x,y)=d(y,x)$;
\item $d(x,z)\le d(x,y)+d(y,z)$.
\end{enumerate}
A function $d:X\times X\to[0,\infty)$ is called a {\em premetric} (resp. a {\em symmetric}) on $X$ if it satisfies  the condition (1) (resp. the conditions (1) and (2)). 

We say that the topology of a topological space $X$ is {\em generated by a premetric} $d$ if a subset $U\subseteq X$ is open if and only if for every $x\in U$ there exists $\e>0$ such that $B_d(x;\e)\subseteq U$ where $B_d(x;\e)\defeq\{y\in X:d(x,y)<\e\}$ is the $\e$-ball centered at $x$. 

A topological space is {\em symetrizable} (resp. {\em metrizable}) if its topology is generated by some symmetric (resp. metric). Symmetrizable spaces satisfy the separation axiom $T_1$.  

By the classical Urysohn Metrization Theorem \cite[4.2.9]{Eng}, each second-countable regular space is metrizable.

In this paper we address the following question (asked by the first author at {\tt Mathoverflow} \cite{MO}).

\begin{problem}\label{prob1} Is every second-countable Hausdorff space symmetrizable?
\end{problem}

To our surprise we have discovered that Problem~\ref{prob1} has negative answer, contrary to Theorem 2.9 \cite{Ar} of Arhangelski claiming that a first-countable $T_1$ space with a $\sigma$-discrete network is symmetrizable. In fact, the proof of this theorem works only under an additional restriction that the $\sigma$-discrete network consists of closed sets. Let us recall that a family $\mathcal F$ of subsets of a topological space $X$ is called a {\em network} for $X$ if for every open set $U\subseteq X$ and point $x\in U$ there exists a set $F\in\mathcal F$ such that $x\in F\subseteq X$. A network $\F$ is {\em closed} if every set $F\in\mathcal F$ is closed in $X$. So, the correct version of Arhangelskii's Theorem 2.9 in \cite{Ar} reads as follows.

\begin{theorem}[Arhangelskii] Every first-countable $T_1$  space with a $\sigma$-discrete closed network is symmetrizable.
\end{theorem}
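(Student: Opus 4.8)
The plan is to build an explicit symmetric $d$ generating the topology $\tau$ of $X$, by first reducing the problem to a statement about convergent sequences and then constructing $d$ from the given data. Since $X$ is first-countable it is sequential, so a subset is $\tau$-closed if and only if it is closed under limits of $\tau$-convergent sequences. Using this, I would first observe that a symmetric $d$ generates $\tau$ as soon as the following two conditions hold for every point $x$ and every sequence $(x_k)$: (i) if $d(x,x_k)\to 0$ then $x_k\to x$ in $\tau$; and (ii) if $x_k\to x$ in $\tau$ then $\liminf_k d(x,x_k)=0$. Indeed, (i) guarantees that every $\tau$-closed set $F$ satisfies $\{y:d(y,F)=0\}=F$, so that every $\tau$-open set is open in the topology generated by $d$, while (ii) guarantees the converse inclusion, because a set invariant under the operator $A\mapsto\{y:d(y,A)=0\}$ is then sequentially $\tau$-closed, hence $\tau$-closed. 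Thus the whole task reduces to producing a symmetric satisfying (i) and (ii).

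For the construction, fix for each $x$ a decreasing base $(B_n(x))_{n\in\IN}$ of open neighborhoods with $\bigcap_n B_n(x)=\{x\}$ (possible since $X$ is $T_1$), and write the network as $\F=\bigcup_{n\in\IN}\F_n$ with each $\F_n$ discrete and consisting of closed sets. The key device is to shrink each $B_n(x)$ to
$$\tilde V_n(x)\defeq B_n(x)\setminus\bigcup\{F\in\textstyle\bigcup_{l\le n}\F_l: x\notin F\}.$$
Here closedness and discreteness of the $\F_l$ are used crucially: a discrete family of closed sets is locally finite with closed union, so $\tilde V_n(x)$ is again an open neighborhood of $x$, and the sets $\tilde V_n(x)$ still form a decreasing neighborhood base at $x$. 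I then define the symmetric by $d(x,x)=0$ and, for $x\ne y$, $d(x,y)\defeq\inf\{2^{-n}: y\in\tilde V_n(x)\text{ or }x\in\tilde V_n(y)\}$; this is manifestly symmetric, and it is a premetric because the neighborhoods shrink to $\{x\}$.

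Condition (ii) is then immediate: if $x_k\to x$ then $x_k$ eventually enters each neighborhood $\tilde V_n(x)$, so in fact $d(x,x_k)\to 0$. The substance is condition (i), which I would prove by contradiction. If $d(x,x_k)\to 0$ but $x_k\not\to x$, choose a neighborhood $U\ni x$ and a subsequence with $x_{k_j}\notin U$, and use the network property to pick $F\in\F_m$ with $x\in F\subseteq U$. For $n$ larger than $m$ and than the index with $B_n(x)\subseteq U$, the forward alternative $x_{k_j}\in\tilde V_n(x)$ is impossible, since it would force $x_{k_j}\in U$, while the reverse alternative $x\in\tilde V_n(x_{k_j})$ is also impossible, because $F$ is a set of level $\le n$ with $x_{k_j}\notin F$ but $x\in F$, which is exactly what the definition of $\tilde V_n(x_{k_j})$ deletes. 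This contradiction completes the verification.

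The main obstacle, and the whole reason the corrected hypothesis is needed, is reconciling the symmetry of $d$ with the inherent asymmetry of topological convergence: the naive symmetrization using the condition ``$y\in B_n(x)$ or $x\in B_n(y)$'' admits spurious reverse-convergent sequences that destroy (i). The passage from $B_n(x)$ to $\tilde V_n(x)$ is precisely what removes them, and it works only because the separating network sets are closed, so that deleting them keeps $\tilde V_n(x)$ open, which is what (ii) needs, and because they form discrete, hence locally finite, families. I expect the delicate point to be checking that $\tilde V_n(x)$ remains an open neighborhood, that is, that closedness of the network enters in an essential and non-removable way.
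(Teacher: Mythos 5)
The paper states this theorem without proof: it merely cites Arhangelskii's survey and remarks that the original argument requires the network to consist of closed sets, so there is no in-paper proof to compare yours against. Judged on its own, your argument is correct and essentially complete. The reduction is sound: condition (i) gives that every $\tau$-closed set $F$ equals $\{y:d(y,F)=0\}$, hence every $\tau$-open set is open in the $d$-topology, while condition (ii) together with sequentiality of the first-countable space gives the converse inclusion. The sets $\tilde V_n(x)$ are indeed open neighborhoods of $x$ (the subfamily $\{F\in\bigcup_{l\le n}\F_l: x\notin F\}$ of the locally finite family $\bigcup_{l\le n}\F_l$ of closed sets has closed union missing $x$), they decrease and shrink to $\{x\}$ by $T_1$, and your verification of (i) uses the closedness and discreteness of the network exactly where the paper says Arhangelskii's original proof needs them. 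The only loose end is cosmetic: when no $n$ satisfies $y\in\tilde V_n(x)$ or $x\in\tilde V_n(y)$, your $d(x,y)$ is an infimum over the empty set; take instead the infimum of $\{1\}\cup\{2^{-n}: y\in\tilde V_n(x)\text{ or }x\in\tilde V_n(y)\}$ so that $d$ is real-valued (positivity for $x\ne y$ then follows, as you note, from $\bigcap_n\tilde V_n(x)=\{x\}$).
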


This theorem implies

\begin{corollary}\label{c:Ar} Every first-countable $T_1$-space with a countable closed network is symmetrizable.
\end{corollary}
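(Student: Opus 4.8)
The plan is to deduce this corollary directly from the preceding theorem of Arhangelskii, with the only real work being the verification of a set-theoretic triviality. The key observation is that every countable family of subsets of a topological space is automatically $\sigma$-discrete. Indeed, a family consisting of a single set is discrete, since every point of the space admits a neighbourhood---for instance the whole space---meeting at most one member of the family. Consequently a countable family $\{F_n:n\in\IN\}$ can be written as the countable union $\bigcup_{n\in\IN}\{F_n\}$ of one-element families, each of which is discrete, which is precisely what it means for the family to be $\sigma$-discrete.

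Applying this observation, I would take a countable closed network $\F$ of a first-countable $T_1$-space $X$ and note that it is at the same time a $\sigma$-discrete closed network for $X$: it is countable, hence $\sigma$-discrete by the preceding remark, and it is closed and a network by hypothesis. The hypotheses of the theorem of Arhangelskii are therefore satisfied, and so $X$ is symmetrizable, which is exactly the assertion of the corollary. I expect no genuine obstacle in this argument; the single point that requires any attention is the remark that a one-element family counts as a discrete family, and this is immediate from the definition of discreteness.
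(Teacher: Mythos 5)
Your argument is correct and is exactly the (implicit) derivation the paper intends: the corollary is stated as an immediate consequence of Arhangelskii's theorem, the only point being that a countable family is $\sigma$-discrete since each singleton subfamily is trivially discrete. Your verification of that point is accurate, so the proposal matches the paper's approach.
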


We shall apply this corollary to prove that for second-countable Hausdorff spaces the symmetrizability is equivalent to the perfectness. 

A topological space $X$ is called {\em perfect} if every closed subset $F$ of $X$ is of type $G_\delta$, i.e., $F$ is the intersection of countably many open sets. The following proposition is known, see the discussion before Theorem~9.8 in \cite{Grue}.

\begin{proposition}\label{p:s=>p} Every first-countable symmetrizable Hausdorff space $X$  is perfect.
\end{proposition}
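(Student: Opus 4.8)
\emph{Plan.} Perfectness says that every closed set is of type $G_\delta$, equivalently that every open set is $F_\sigma$, so I shall fix a closed set $F\subseteq X$ and produce a sequence of open sets whose intersection is $F$. Let $d$ be a symmetric generating the topology of $X$, and for a subset $A\subseteq X$ write $d(x,A)\defeq\inf_{a\in A}d(x,a)$ and $B_d(A;\e)\defeq\{x\in X:d(x,A)<\e\}$. The one fact that comes for free from the definition of a symmetrizable space is that $x\notin\overline A$ implies $d(x,A)>0$: indeed $X\setminus\overline A$ is then an open neighbourhood of $x$, so it contains some ball $B_d(x;\e)$, and this ball misses $A$. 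Together with the trivial inclusion $F\subseteq\{x:d(x,F)=0\}$, this yields for the \emph{closed} set $F$ the identity $F=\{x:d(x,F)=0\}=\bigcap_{n\in\IN}B_d(F;1/n)$. Thus $F$ is already a countable intersection of the sets $B_d(F;1/n)$; the only thing missing is that these sets need not be open, because balls of a symmetric need not be neighbourhoods of their centres.

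Repairing this is exactly where first-countability enters, and I expect it to be the main obstacle. The plan is to replace $d$ by a symmetric $\rho$ that still generates the topology but for which, in addition, the balls $\{B_\rho(x;1/n):n\in\IN\}$ form a neighbourhood base at every point $x$; that is, to pass from a symmetrizable space to a \emph{semimetrizable} one. That a first-countable symmetrizable space carries such a compatible symmetric is a known theorem of Arhangelskii (see the discussion in \cite{Grue} around Theorem~9.8). Here first-countability is what is essential, since it supplies the countable neighbourhood bases out of which $\rho$ is manufactured, while the standing Hausdorff assumption plays only an auxiliary role (ensuring uniqueness of sequential limits). I expect the verification that such a $\rho$ exists to be the technical heart of the matter: the construction is delicate because the naive symmetrization of the neighbourhood bases fails to make balls into neighbourhoods, owing to the asymmetry of the relation ``$x$ lies in a basic neighbourhood of $y$''.

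Granting such a $\rho$, the conclusion is immediate and uses only the symmetry of $\rho$. For each $n$ I would put $G_n\defeq\mathrm{int}\,B_\rho(F;1/n)$, an open set. If $x\in F$, then every $y$ with $\rho(x,y)<1/n$ satisfies $\rho(y,F)\le\rho(y,x)=\rho(x,y)<1/n$, so $B_\rho(x;1/n)\subseteq B_\rho(F;1/n)$; as this ball is a neighbourhood of $x$, we get $x\in G_n$, whence $F\subseteq\bigcap_n G_n$. Conversely $\bigcap_n G_n\subseteq\bigcap_n B_\rho(F;1/n)=\{x:\rho(x,F)=0\}$, and if $\rho(x,F)=0$ then every ball $B_\rho(x;1/n)$ meets $F$; since these balls form a neighbourhood base, every neighbourhood of $x$ meets $F$, so $x\in\overline F=F$. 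Therefore $F=\bigcap_{n\in\IN}G_n$ is a $G_\delta$-set, and $X$ is perfect.
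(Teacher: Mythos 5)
Your argument is correct, but it routes the entire difficulty through a citation rather than a proof. The deduction from ``there is a compatible symmetric whose balls form neighbourhood bases at every point'' to perfectness is carried out cleanly and coincides in substance with the second half of the paper's proof; what you defer to Arhangelskii's semimetrization theorem, and describe as a delicate construction you have not verified, is precisely the content of the paper's first half. Two remarks. First, the cited theorem is genuine (it is the semimetrizability criterion in Gruenhage's chapter, near the very Theorem~9.8 the paper points to), so as a proof by citation your argument stands, whereas the paper deliberately gives a self-contained one. Second, and more substantively, your expectation that one must manufacture a \emph{new} symmetric $\rho$ out of neighbourhood bases is off the mark: in the first-countable Hausdorff case the \emph{original} symmetric $d$ already works, i.e.\ $x$ lies in the interior of every ball $B_d(x;\e)$, and this is proved in a few lines. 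If $x\notin B_d(x;\e)^\circ$, first-countability yields a sequence $S=\{x_n\}_{n\in\w}\subseteq X\setminus B_d(x;\e)$ converging to $x$; by Hausdorffness the compact set $\{x\}\cup S$ is closed, so every $y\notin\{x\}\cup S$ has a ball $B_d(y;\e_y)$ missing $S$, and $B_d(x;\e)$ itself misses $S$, whence $S$ is closed in the topology generated by $d$ --- contradicting $x_n\to x\notin S$. With this lemma your concluding computation goes through verbatim with $\rho=d$; your sets $G_n=\mathrm{int}\,B_d(F;\tfrac1n)$ differ only harmlessly from the paper's $U_n=\bigcup_{x\in F}B_d(x;\tfrac1n)^\circ$. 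So the only thing separating your proposal from a complete self-contained proof is this short sequence argument, not a metrization-type construction.
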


\begin{proof} We present a short proof for the convenience of the reader. Let $d$ be a symmetric generating the topology of $X$. First we show that for every $x\in X$ and $\e>0$ the point $x$ is contained in the interior $B_d(x;\e)^\circ$ of the ball $B_d(x;\e)$. Indeed, in the opposite case we can use the first-countability of $X$ and find a sequence $S=\{x_n\}_{n\in\w}\subseteq X\setminus B_d(x;\e)$ that converges to $x$. The Hausdorff property of $X$ implies that the compact subset $K=\{x\}\cup S$ is closed in $X$ and hence for every $y\in X\setminus K$ there exists $\e_y$ such that $B_d(y;\e_y)\cap S\subseteq B_d(y;\e_y)\cap K=\emptyset$. Since also $B_d(x;\e)\cap S=\emptyset$, the set $S$ is closed in $X$, which is not possible as the sequence $(x_n)_{n\in\w}$ converges to $x\notin S$. This contradiction shows that $x$ is an interior point of the ball $B_d(x;\e)$.

Now we are ready to prove that $X$ is perfect. Given any closed set $F\subseteq X$, for every $n\in\IN$ consider the open neighborhood $U_n\defeq\bigcup_{x\in F}B_d(x;\frac1n)^\circ$ of $F$ in $X$ and observe that $F=\bigcap_{n\in\w}U_n$.
\end{proof}

\begin{proposition}\label{p:p=>s} Each perfect second-countable $T_1$-space $X$ is symmetrizable.
\end{proposition}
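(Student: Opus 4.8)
The plan is to reduce the statement to Corollary~\ref{c:Ar} by exhibiting a countable closed network on $X$. Since $X$ is second-countable it is automatically first-countable and (by hypothesis) $T_1$, so once a countable closed network is produced the conclusion will follow immediately from that corollary. Thus the entire task is to build such a network out of a countable base together with the perfectness assumption.

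First I would fix a countable base $\{B_n\}_{n\in\w}$ for the topology of $X$. The essential use of perfectness enters next: in a perfect space every closed set is of type $G_\delta$, which by passing to complements is the same as saying that every open set is of type $F_\sigma$. Applying this reformulation to each basic open set, I would write $B_n=\bigcup_{m\in\w}F_{n,m}$ as a countable union of closed subsets $F_{n,m}\subseteq B_n$.

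It then remains to verify that the family $\F\defeq\{F_{n,m}:n,m\in\w\}$, which is a countable union of countable families and hence countable, is a closed network for $X$. Given an open set $U\subseteq X$ and a point $x\in U$, I would first choose $n$ with $x\in B_n\subseteq U$ (possible because $\{B_n\}_{n\in\w}$ is a base) and then choose $m$ with $x\in F_{n,m}$ (possible because the sets $F_{n,m}$ cover $B_n$). This yields $x\in F_{n,m}\subseteq B_n\subseteq U$, so $\F$ satisfies the defining property of a network, and all of its members are closed by construction. Feeding the countable closed network $\F$ into Corollary~\ref{c:Ar} then shows that the first-countable $T_1$-space $X$ is symmetrizable.

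I expect no genuine obstacle in this argument: all of the real difficulty has already been absorbed into the corrected Arhangelskii theorem and its Corollary~\ref{c:Ar}. The only points deserving a word of care are the equivalence ``$X$ perfect $\Leftrightarrow$ every open subset of $X$ is $F_\sigma$'', which is merely the complementary restatement of the definition of perfectness, and the routine bookkeeping that $\F$ is genuinely countable.
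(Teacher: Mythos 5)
Your proposal is correct and follows exactly the paper's own argument: use perfectness to write each member of a countable base as a countable union of closed sets, observe that the resulting countable family of closed sets is a network, and apply Corollary~\ref{c:Ar}. You merely spell out the routine verification of the network property in slightly more detail than the paper does.
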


\begin{proof} Let $\mathcal B$ be a countable base of the topology of $X$. By the perfectness of $X$, every open set $B\in\mathcal B$ is equal to the union $\bigcup\mathcal F_B$ of a countable family $\F_B$ of closed sets in $X$. Then $\F\defeq\bigcup_{B\in\mathcal B}\F_B$ is a countable closed network for $X$. By Corollary~\ref{c:Ar}, the space $X$ is symmetrizable.
\end{proof}

Propositions~\ref{p:s=>p} and \ref{p:p=>s} imply the following criterion.

\begin{theorem} A second-countable Hausdorff space is symmetrizable if and only if it is perfect.
\end{theorem}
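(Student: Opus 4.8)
The plan is to observe that this criterion is nothing more than the conjunction of Propositions~\ref{p:s=>p} and \ref{p:p=>s}, so the real work reduces to checking that the separation and countability hypotheses of those two propositions are met by any second-countable Hausdorff space. The two elementary facts I will invoke are that every second-countable space is first-countable (a countable base of $X$ restricts to a countable base at each point) and that every Hausdorff space is a $T_1$-space.

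For the ``only if'' implication I would argue as follows. Let $X$ be a second-countable Hausdorff space that is symmetrizable. Since $X$ is second-countable, it is first-countable, and being Hausdorff it meets all the hypotheses of Proposition~\ref{p:s=>p}. That proposition then immediately yields that $X$ is perfect.

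For the ``if'' implication I would argue symmetrically. Let $X$ be a second-countable Hausdorff space that is perfect. Being Hausdorff, $X$ is in particular a $T_1$-space, so $X$ is a perfect second-countable $T_1$-space, and Proposition~\ref{p:p=>s} applies to give the symmetrizability of $X$.

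I do not anticipate any genuine obstacle, since all the mathematical content has already been isolated in the two propositions and the only thing left to verify is the compatibility of their hypotheses. The one point perhaps worth stating explicitly is the asymmetry in what each direction consumes: the ``only if'' direction needs merely first-countability together with the Hausdorff property, whereas full second-countability is used (via Proposition~\ref{p:p=>s}) only for the ``if'' direction.
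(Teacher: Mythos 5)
Your proposal is correct and is exactly the paper's argument: the theorem is stated as an immediate consequence of Propositions~\ref{p:s=>p} and \ref{p:p=>s}, using that second-countable implies first-countable and Hausdorff implies $T_1$. Your explicit check of the hypothesis compatibility is the only content, and it matches what the paper leaves implicit.
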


Next, we prove that second-countable (submetrizable) $T_1$-spaces of sufficiently small cardinality are symmetrizable.

A topological space is {\em submetrizable} if it  admits a continuous metric.
Each submetrizable space is {\em functionally Hausdorff} in the sense that for any distinct elements $x,y\in X$ there exists a continuous function $f:X\to\mathbb R$ such that $f(x)\ne f(y)$. By \cite{BB}, a second-countable space is submetrizable if and only if it is functionally Hausdorff.

A topological space $X$ is called a {\em $Q$-space} if every subset of $X$ is of type $G_\delta$ in $X$. Every $Q$-space is perfect.

Let $\mathfrak q_0$ be the smallest cardinality of a second-countable metrizable space which is not a $Q$-space.  For properties of the cardinal $\mathfrak q_0$, see \cite[\S4]{Miller}, \cite{Brendle}, \cite{BMZ}. 

For $i\in\{1,2,3\}$, let $\mathfrak q_i$ be the smallest cardinality of a second-countable $T_i$-space which is not a $Q$-space. It is clear that $\w_1\le \mathfrak q_1\le \mathfrak q_2\le\mathfrak q_3=\mathfrak q_0\le\mathfrak c$, where $\mathfrak c$ stands for the cardinality of continuum. By \cite{BB}, $\mathfrak p\le\mathfrak q_1$, where $\mathfrak p$ is the smallest cardinality of a subfamily $\mathcal B\subseteq[\w]^\w$ such that for every finite subfamily $\F\subseteq\mathcal B$ the intersection $\bigcap\F$ is infinite but for every infinite set $I\subseteq\w$ there exists a set $F\in\F$ such that $I\cap F$ is finite. It is well-known (see \cite{Blass} or \cite{Vaughan})  that $\mathfrak p=\mathfrak c$ under Martin's Axiom. By \cite{BB}, every submetrizable space of cardinality $<\mathfrak q_0$ is a $Q$-space. This fact combined with Proposition~\ref{p:p=>s} implies

\begin{proposition}\label{p:q0} Every second-countable submetrizable space of cardinality $<\mathfrak q_0$ is symmetrizable.
\end{proposition}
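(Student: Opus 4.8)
The plan is to chain together two facts that have just been assembled in the excerpt. The statement claims that every second-countable submetrizable space $X$ of cardinality $<\mathfrak q_0$ is symmetrizable, and the two ingredients I need are stated immediately before it: first, the result from \cite{BB} that every submetrizable space of cardinality $<\mathfrak q_0$ is a $Q$-space; and second, Proposition~\ref{p:p=>s}, which asserts that each perfect second-countable $T_1$-space is symmetrizable. So the entire argument is a short deduction rather than a construction.

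First I would invoke the cited fact from \cite{BB}: since $X$ is submetrizable and has cardinality $<\mathfrak q_0$, it is a $Q$-space. Next I would recall the observation made in the excerpt that every $Q$-space is perfect (indeed, in a $Q$-space every subset, in particular every closed subset, is of type $G_\delta$). Hence $X$ is a perfect space. I would then note that $X$ is $T_1$: this follows because $X$ is submetrizable and therefore functionally Hausdorff, which in particular gives the $T_1$ separation axiom. At this point $X$ satisfies all the hypotheses of Proposition~\ref{p:p=>s}, namely it is perfect, second-countable, and $T_1$.

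Finally I would apply Proposition~\ref{p:p=>s} to conclude that $X$ is symmetrizable, which completes the proof. Concretely, the proof would read roughly as: \emph{By \cite{BB}, the space $X$ is a $Q$-space, and hence perfect. Since $X$ is also second-countable and $T_1$, Proposition~\ref{p:p=>s} implies that $X$ is symmetrizable.}

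I do not expect any genuine obstacle here, since both nontrivial components are quoted as already-established results (one external, one internal). The only point requiring a moment's care is confirming that the chain of implications \textbf{submetrizable} $\Rightarrow$ $T_1$ and \textbf{$Q$-space} $\Rightarrow$ \textbf{perfect} is in place, both of which are recorded explicitly in the surrounding text; the rest is a direct citation of Proposition~\ref{p:p=>s}. If anything, the main subtlety lies outside this proof, in the external result from \cite{BB} that a submetrizable space of small cardinality is a $Q$-space, but that is assumed as given.
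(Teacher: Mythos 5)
Your proof is correct and follows exactly the same route as the paper, which derives Proposition~\ref{p:q0} by combining the result of \cite{BB} (submetrizable spaces of cardinality $<\mathfrak q_0$ are $Q$-spaces, hence perfect) with Proposition~\ref{p:p=>s}. Your extra remark that submetrizability implies $T_1$ is a reasonable bit of added care that the paper leaves implicit.
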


Since functionally Hausdorff second-countable spaces are submetrizable, Proposition~\ref{p:q0} implies another criterion of symmetrizability.

\begin{proposition}\label{p:q0'} Every second-countable functionally Hausdorff space of cardinality $<\mathfrak q_0$ is symmetrizable.
\end{proposition}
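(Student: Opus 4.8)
The plan is to reduce this statement directly to Proposition~\ref{p:q0}, which already settles the submetrizable case; the only bridge I need is the equivalence, for second-countable spaces, between functional Hausdorffness and submetrizability.

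First I would fix an arbitrary second-countable functionally Hausdorff space $X$ with $|X|<\mathfrak q_0$. The key input is the result of \cite{BB} asserting that a second-countable space is submetrizable if and only if it is functionally Hausdorff. Applying this characterization to $X$, I obtain that $X$ is submetrizable. I would then simply invoke Proposition~\ref{p:q0}: since $X$ is a second-countable submetrizable space of cardinality $<\mathfrak q_0$, it is symmetrizable, which is the desired conclusion.

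The main obstacle is not located in this proof at all. The substantive work has already been carried out in the two ingredients being combined here: the characterization of second-countable submetrizable spaces from \cite{BB}, and the proof of Proposition~\ref{p:q0}, which itself rests on the fact (also from \cite{BB}) that every submetrizable space of cardinality $<\mathfrak q_0$ is a $Q$-space, together with Proposition~\ref{p:p=>s}. Once those are granted, the present statement is a one-line consequence, and I would expect the write-up to consist of little more than the two sentences sketched above.
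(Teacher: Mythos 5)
Your proof is correct and matches the paper's own argument exactly: the paper derives Proposition~\ref{p:q0'} from Proposition~\ref{p:q0} using precisely the cited equivalence from \cite{BB} between functional Hausdorffness and submetrizability for second-countable spaces. Nothing further is needed.
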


Proposition~\ref{p:p=>s} combined with the definition of the cardinals $\mathfrak q_i$ implies the following semimetrizability criterion.

\begin{proposition}\label{p:qi} Let $i\in\{1,2\}$. Every second-countable $T_i$-space $X$ of cardinality $|X|<\mathfrak q_i$ is symmetrizable.
\end{proposition}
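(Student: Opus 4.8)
The plan is to chain together three ingredients already available: the minimality built into the definition of $\mathfrak q_i$, the implication ``$Q$-space $\Rightarrow$ perfect'', and Proposition~\ref{p:p=>s}. First I would observe that since $X$ is a second-countable $T_i$-space with $|X|<\mathfrak q_i$, it must be a $Q$-space. Indeed, if $X$ were not a $Q$-space, then $X$ would be a second-countable $T_i$-space that is not a $Q$-space and whose cardinality is strictly smaller than $\mathfrak q_i$; this contradicts the choice of $\mathfrak q_i$ as the smallest cardinality of a second-countable $T_i$-space which is not a $Q$-space.

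Next I would recall the observation made just before the definition of $\mathfrak q_0$, namely that every $Q$-space is perfect. Applying this to the $Q$-space $X$ obtained above shows that $X$ is perfect. Since $i\in\{1,2\}$, the $T_i$ separation axiom implies the $T_1$ separation axiom, so $X$ is in particular a perfect second-countable $T_1$-space.

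Finally, Proposition~\ref{p:p=>s} applies directly to the perfect second-countable $T_1$-space $X$ and yields that $X$ is symmetrizable, which completes the argument.

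I do not expect a genuine obstacle here: the statement is a direct deduction from results already in place. The only two points that require care are the correct use of the minimality clause in the definition of $\mathfrak q_i$ (to pass from ``cardinality $<\mathfrak q_i$'' to ``$Q$-space''), and the remark that the hypothesis $i\in\{1,2\}$ is precisely what guarantees the $T_1$ property needed to invoke Proposition~\ref{p:p=>s}.
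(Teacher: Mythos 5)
Your proposal is correct and is exactly the argument the paper intends: the paper derives Proposition~\ref{p:qi} by combining the minimality in the definition of $\mathfrak q_i$ (so $X$ is a $Q$-space, hence perfect) with Proposition~\ref{p:p=>s}. Your explicit note that $i\in\{1,2\}$ guarantees the $T_1$ hypothesis needed for Proposition~\ref{p:p=>s} is a correct and welcome elaboration of the step the paper leaves implicit.
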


Since $\mathfrak p=\mathfrak q_1=\mathfrak c$, we obtain the following corollary. 

\begin{corollary} Under Martin's Axiom, every second-countable $T_1$-space of cardinality $<\mathfrak c$ is symmetrizable.
\end{corollary}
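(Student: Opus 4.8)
The plan is to reduce the corollary to Proposition~\ref{p:qi} by showing that Martin's Axiom forces the cardinal $\mathfrak q_1$ up to the continuum. Concretely, I would first recall the chain of inequalities $\mathfrak p\le\mathfrak q_1\le\mathfrak c$ established earlier: the inequality $\mathfrak p\le\mathfrak q_1$ is the cited result from \cite{BB}, while $\mathfrak q_1\le\mathfrak c$ is part of the displayed inequalities $\w_1\le\mathfrak q_1\le\mathfrak q_2\le\mathfrak q_3=\mathfrak q_0\le\mathfrak c$.

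Next I would invoke the well-known fact (see \cite{Blass} or \cite{Vaughan}) that Martin's Axiom implies $\mathfrak p=\mathfrak c$. Substituting this into the chain gives $\mathfrak c=\mathfrak p\le\mathfrak q_1\le\mathfrak c$, so that $\mathfrak q_1=\mathfrak c$ under Martin's Axiom. This is the only genuine step in the argument, and it is immediate once the two cited facts are in hand.

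Finally, with $\mathfrak q_1=\mathfrak c$ available, I would apply Proposition~\ref{p:qi} in the case $i=1$: every second-countable $T_1$-space $X$ with $|X|<\mathfrak q_1=\mathfrak c$ is symmetrizable, which is exactly the assertion of the corollary. I do not expect any real obstacle here, since all the substance has already been packaged into Proposition~\ref{p:qi} and into the external facts $\mathfrak p\le\mathfrak q_1$ and $\mathfrak p=\mathfrak c$; the corollary is a direct specialization of the former under the hypothesis forced by Martin's Axiom.
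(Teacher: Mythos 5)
Your argument is correct and is exactly the paper's: the paper derives the corollary from Proposition~\ref{p:qi} (case $i=1$) together with the inequalities $\mathfrak p\le\mathfrak q_1\le\mathfrak c$ and the fact that Martin's Axiom gives $\mathfrak p=\mathfrak c$, hence $\mathfrak q_1=\mathfrak c$. No differences worth noting.
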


Finally, we show that the cardinals $\mathfrak q_0$ and $\mathfrak q_2$ in Propositions~\ref{p:q0}, \ref{p:q0'}, \ref{p:qi} are the best possible.

\begin{example} There exists a second-countable submetrizable space of cardinality $\mathfrak q_0$ which is not symmetrizable.
\end{example}

\begin{proof} By the definition of the cardinal $\mathfrak q_0$, there exists a second-countable metrizable space $X$, which is not a $Q$-space. Then $X$ contains a subset $A$ which is not of type $G_\delta$ in $X$. Let $\tau'$ be the topology on $X$ generated by the subbase $\tau\cup\{A\}$ where $\tau$ is the topology of the space $X$.  Since $\tau\subseteq\tau'$, the space $X'$ is submetrizable. Assuming that $X'$ is perfect, we conclude that the closed set $A$ is equal to the intersection $\bigcap_{n\in\w}W_n$ of some open sets $W_n\in\tau'$. By the choice of the topology $\tau'$, for every $n\in\w$ there exists open sets $U_n,V_n\in \tau$ such that $W_n=U_n\cup(V_n\setminus A)$. It follows from $A\subseteq W_n=U_n\cup(V_n\setminus A)$ that $A=A\cap W_n=A\cap U_n\subseteq U_n$.
$$A=\bigcap_{n\in\w}W_n=A\cap\bigcap_{n\in\w}W_n=\bigcap_{n\in\w}(A\cap W_n)=\bigcap_{n\in\w}(A\cap U_n)\subseteq \bigcap_{n\in\w}U_n\subseteq \bigcap_{n\in\w}W_n=A$$
and hence $A=\bigcap_{n\in\w}U_n$ is a $G_\delta$-set in $X$, which contradicts the choice of $A$. This contradiction shows that the submetrizable second-countable space $X'$ is not perfect. By Proposition~\ref{p:s=>p}, $X'$ is not symmetrizable.
\end{proof}

By analogy, we can prove that the cardinal $\mathfrak q_2$ in Proposition~\ref{p:qi} is the best possible.

\begin{example} There exists a second-countable Hausdorff space of cardinality $\mathfrak q_2$ which is not symmetrizable.
\end{example}

However this argument does not work for the cardinal $\mathfrak q_1$ (because Proposition~\ref{p:p=>s} is applicable only for Hausdorff spaces).

\begin{problem} Is $\mathfrak q_1$ equal to the smallest cardinality of a second-countable $T_1$-space which is not symmetrizable?
\end{problem}

\begin{problem} Is $\mathfrak q_1=\mathfrak q_2=\mathfrak q_0$?
\end{problem}

By Proposition~\ref{p:s=>p}, every symmetrizable first-countable Hausdorff space is perfect. On the other hand, by \cite{Bonnett}, \cite{DGN}, \cite{Step}, there exists a non-perfect symmetrizable Hausdorff (even regular) spaces. However those examples are not first-countable.

\begin{question}\label{q:p} Is every second-countable symmetrizable space perfect?
\end{question}

\begin{example} Consider the set $X=\{-\frac1n:n\in\mathbb N\}\cup\{0\}\cup\{\tfrac1n:n\in\mathbb N\}$ endowed with the symmetric
$$d(x,y)=\begin{cases}\max\{x,y\}&\text{if $\max\{x,y\}>0$ and $\min\{x,y\}<0$};\\
\lvert x-y\rvert&\text{if $\max\{x,y\}\le 0$};\\
0&\text{if $x=y$};\\
1&\text{otherwise}.
\end{cases}
$$
This symmetric generates a first-countable non-Hausdorff topology in which the unit ball $B_d(0,1)=\{-\frac1n:n\ge 2\}\cup\{0\}$ is nowhere dense. So, the proof of the perfectness of symmetrizable first-countable Hausdorff spaces from Proposition~\ref{p:s=>p} does not work in this case. Nonetheless, the symmetrizable space $X$ is countable and hence perfect, so this example does not answer Question~\ref{q:p}.
\end{example}

\end{document}